\newtheorem{theorem}{Theorem}[section]
\newtheorem{corollary}[theorem]{Corollary}
\theoremstyle{definition}
\newtheorem{definition}[theorem]{Definition}
\newtheorem{remark}[theorem]{Remark}
\numberwithin{equation}{section} \subjclass[2010]{30C45}
\begin{document}
\keywords{Faber polynomials, bi-univalent functions, analytic functions, upper bound.}
\title[Faber polynomial coefficient estimates ... ]{Faber polynomial coefficient estimates for a class of analytic bi-univalent
functions }
\author{A.A. AMOURAH}
\address{A.A. AMOURAH: Department of Mathematics, Faculty of Science and Technology,
Irbid National University, Irbid, Jordan.}
\email{alaammour@yahoo.com.}

\begin{abstract}
In the present paper, we were mainly concerned with obtaining estimates for the general Taylor-Maclaurin coefficients for functions in a certain general subclass of analytic bi-univalent functions. For this purpose, we used the Faber polynomial expansions. Several connections to some of the earlier known results are also pointed out.
\end{abstract}
\maketitle

\section{Introduction}
Let $\mathcal{A}$ denote the class of all analytic functions $f$ defined in
the open unit disk $\mathbb{U}=\{z\in\mathbb{C}:\left\vert z\right\vert <1\}$
and normalized by the conditions $f(0)=0$ and $f^{\prime}(0)=1$. Thus each
$f\in\mathcal{A}$ has a Taylor-Maclaurin series expansion of the form:%

\begin{equation}
f(z)=z+\sum\limits_{n=2}^{\infty}a_{n}z^{n},\ \ (z\in\mathbb{U}).\text{
\ \ \ \ } \label{ieq1}
\end{equation}

Further, let $\mathcal{S}$ denote the class of all functions $f\in\mathcal{A}$
which are univalent in $\mathbb{U}$ (for details, see \cite{Duren}; see also
some of the recent investigations \cite{C1,C4,C5,C6,C2}).

Two of the important and well-investigated subclasses of the analytic and
univalent function class $\mathcal{S}$ are the class $\mathcal{S}^{\ast
}(\alpha)$ of starlike functions of order $\alpha$ in $\mathbb{U}$ and the
class $\mathcal{K}(\alpha)$ of convex functions of order $\alpha$ in
$\mathbb{U}$. By definition, we have%

\begin{equation}
\mathcal{S}^{\ast}(\alpha):=\left\{  f:\ f\in\mathcal{S}\ \ \text{and}%
\ \ \mbox{Re}\left\{  \frac{zf^{\prime}(z)}{f(z)}\right\}  >\alpha,\quad
(z\in\mathbb{U};0\leq\alpha<1)\right\}, \label{d1}%
\end{equation}
and%
\begin{equation}
\mathcal{K}(\alpha):=\left\{  f:\ f\in\mathcal{S}\ \ \text{and}%
\ \ \mbox{Re}\left\{  1+\frac{zf^{\prime\prime}(z)}{f^{\prime}(z)}\right\}
>\alpha,\quad(z\in\mathbb{U};0\leq\alpha<1)\right\}. \label{d2}%
\end{equation}

It is clear from the definitions (\ref{d1}) and (\ref{d2}) that
$\mathcal{K}(\alpha)\subset\mathcal{S}^{\ast}(\alpha)$. Also we have%

\[
f(z)\in\mathcal{K}(\alpha)\ \ \text{iff}\ \ zf^{\prime}(z)\in\mathcal{S}%
^{\ast}(\alpha),
\]
and%
\[
f(z)\in\mathcal{S}^{\ast}(\alpha)\ \ \text{iff}\ \ \int_{0}^{z}\frac{f(t)}%
{t}dt=F(z)\in\mathcal{K}(\alpha).
\]

It is well-known that, if $f(z)$ is an univalent analytic function
from a domain $\mathbb{D}_{1}$ onto a domain $\mathbb{D}_{2}$, then the
inverse function $g(z)$ defined by%
\[
g\left(  f(z)\right)  =z,\ \ (z\in\mathbb{D}_{1}),
\]
is an analytic and univalent mapping from $\mathbb{D}_{2}$ to
$\mathbb{D}_{1}$. Moreover, by the familiar Koebe one-quarter theorem (for
details, (see \cite{Duren}), we know that the image of $\mathbb{U}$ under
every function $f\in\mathcal{S}$ contains a disk of radius $\frac{1}{4}$.

According to this, every function $f\in\mathcal{S}$ has an inverse map
$f^{-1}$ that satisfies the following conditions:

\begin{center}
$f^{-1}(f(z))=z \ \ \ (z\in\mathbb{U}),$
\end{center}
and
\begin{center}
$f\left(  f^{-1}(w)\right)  =w$ $\ \ \ \left(  |w|<r_{0}(f);r_{0}(f)\geq
\frac{1}{4}\right)  $.
\end{center}

In fact, the inverse function is given by%
\begin{equation}
f^{-1}(w)=w-a_{2}w^{2}+(2a_{2}^{2}-a_{3})w^{3}-(5a_{2}^{3}-5a_{2}a_{3}%
+a_{4})w^{4}+\cdots. \label{ieq2}%
\end{equation}

A function $f\in\mathcal{A}$ is said to be bi-univalent in
$\mathbb{U}$ if both $f(z)$ and $f^{-1}(z)$ are univalent in $\mathbb{U}$. Let
$\Sigma$ denote the class of bi-univalent functions in $\mathbb{U}$ given by
(\ref{ieq1}). Examples of functions in the class $\Sigma$ are%
\[
\frac{z}{1-z},\ -\log(1-z),\ \frac{1}{2}\log\left(  \frac{1+z}{1-z}\right)
,\cdots.
\]

It is worth noting that the familiar Koebe function is not a member
of $\Sigma$, since it maps the unit disk $\mathbb{U}$ univalently onto the
entire complex plane except the part of the negative real axis from $-1/4$ to
$-\infty$. Thus, clearly, the image of the domain does not contain the unit
disk $\mathbb{U}$. For a brief history and some intriguing examples of
functions and characterization of the class $\Sigma$, see Srivastava et al.
\cite{C27}, Frasin and Aouf \cite{C28}, and Yousef et al. \cite{C3}.

In 1967, Lewin \cite{C21} investigated the bi-univalent function class
$\Sigma$ and showed that $|a_{2}|<1.51$. Subsequently, Brannan and Clunie
\cite{C22} conjectured that $|a_{2}|\leq\sqrt{2}.$ On the other hand,
Netanyahu \cite{C23} showed that $\underset{f\in\Sigma}{\max}$ $|a_{2}%
|=\frac{4}{3}.$ The best known estimate for functions in $\Sigma$ has been
obtained in 1984 by Tan \cite{C24}, that is, $|a_{2}|<1.485$. The coefficient
estimate problem for each of the following Taylor-Maclaurin coefficients
$|a_{n}|$ $(n\in\mathbb{N}\backslash\{1,2\})$ for each $f\in\Sigma$ given by
(\ref{ieq1}) is presumably still an open problem.

In this paper, we use the Faber polynomial expansions for a general
subclass of analytic bi-univalent functions to determine estimates for the
general coefficient bounds $|a_{n}|$.

The Faber polynomials introduced by Faber \cite{12} play an important role in
various areas of mathematical sciences, especially in geometric function
theory. The recent publications \cite{15} and \cite{17} applying the Faber
polynomial expansions to meromorphic bi-univalent functions motivated us to
apply this technique to classes of analytic bi-univalent functions. In the
literature, there are only a few works determining the general coefficient
bounds $|a_{n}|$ for the analytic bi-univalent functions given by (\ref{ieq1}) using Faber
polynomial expansions (see for example, \cite{16,20,19}). Hamidi and Jahangiri
\cite{16} considered the class of analytic bi-close-to-convex functions.
Jahangiri and Hamidi \cite{19} considered the class defined by Frasin and Aouf
\cite{C28}, and Jahangiri et al. \cite{20} considered the class of analytic
bi-univalent functions with positive real-part derivatives.

\section{\bigskip The class $\mathfrak{B}_{\Sigma}^{\mu}(\alpha,\lambda,\delta
)$}
Firstly, we consider a comprehensive class of analytic bi-univalent functions
introduced and studied by Yousef et al. \cite{class} defined as follows:

\begin{definition}
\label{def22} (See \cite{class}) For $\lambda\geq1,$ $\mu\geq0,$ $\delta\geq0$
and $0\leq\alpha<1$, a function $f\in\Sigma$ given by (\ref{ieq1}) is said to
be in the class $\mathfrak{B}_{\Sigma}^{\mu}(\alpha,\lambda,\delta)$ if the
following conditions hold for all $z,w\in\mathbb{U}$:
\begin{equation}
\mbox{Re}\left(  (1-\lambda)\left(  \frac{f(z)}{z}\right)  ^{\mu}+\lambda
f^{\prime}(z)\left(  \frac{f(z)}{z}\right)  ^{\mu-1}+\xi\delta zf^{\prime
\prime}(z)\right)  >\alpha\label{ieq23}%
\end{equation}
and%
\begin{equation}
\mbox{Re}\left(  (1-\lambda)\left(  \frac{g(w)}{w}\right)  ^{\mu}+\lambda
g^{\prime}(w)\left(  \frac{g(w)}{w}\right)  ^{\mu-1}+\xi\delta wg^{\prime
\prime}(w)\right)  >\alpha, \label{ieq24}%
\end{equation}
where the function $g(w)=f^{-1}(w)$ is defined by (\ref{ieq2}) and $\xi
=\frac{2\lambda+\mu}{2\lambda+1}$.
\end{definition}

\bigskip
\begin{remark}
\label{rem2} In the following special cases of Definition \ref{def22}; we show how the
class of analytic bi-univalent functions $\mathfrak{B}_{\Sigma}^{\mu}%
(\alpha,\lambda,\delta)$ for suitable choices of $\lambda$, $\mu$ and $\delta$
lead to certain new as well as known classes of analytic bi-univalent
functions studied earlier in the literature.

(i) For $\delta=0,$ we obtain the bi-univalent function class
$\mathfrak{B}_{\Sigma}^{\mu}(\alpha,\lambda,0):=\mathfrak{B}_{\Sigma}^{\mu}%
(\alpha,\lambda)$ introduced by \c{C}a\u{g}lar et al. \cite{C29}. \vspace{0.05in}

(ii) For $\delta=0$ and $\mu=1,$ we obtain the bi-univalent function class
$\mathfrak{B}_{\Sigma}^{1}(\alpha,\lambda,0):=\mathfrak{B}_{\Sigma}(\alpha
,\lambda)$ introduced by Frasin and Aouf \cite{C28}. \vspace{0.05in}

(iii) For $\delta=0$, $\mu=1$, and $\lambda=1,$ we obtain the bi-univalent function class
$\mathfrak{B}_{\Sigma}^{1}(\alpha,1,0):=\mathfrak{B}_{\Sigma}(\alpha)$ introduced by Srivastava et al. \cite{C27}. \vspace{0.05in}

(iv) For $\delta=0$, $\mu=0$, and $\lambda=1,$ we obtain the well-known class $\mathfrak{B}_{\Sigma }^{0}(\alpha,1,0):=\mathcal{S}^*_\Sigma(\alpha)$ of bi-starlike functions of order $\alpha$. \vspace{0.05in}

(iv) For $\mu=1$, we obtain the well-known class $\mathfrak{B}_{\Sigma }^{1}(\alpha,\lambda,\delta):=\mathfrak{B}_{\Sigma }(\alpha,\lambda,\delta)$ of bi-univalent functions.
\end{remark}

\newpage

\section{Coefficient estimates}
Using the Faber polynomial expansion of functions $f\in\mathcal{A}$
of the form (\ref{ieq1}), the coefficients of its inverse map $g=f^{-1}$ may
be expressed as in \cite{z1}:%
\begin{equation}
g(w)=f^{-1}(w)=w+\sum\limits_{n=2}^{\infty}\frac{1}{n}K_{n-1}^{-n}\left(
a_{2},a_{3},...\right)  w^{n}, \label{ieq3}%
\end{equation}
where%
\begin{align}
K_{n-1}^{-n}  &  =\frac{\left(  -n\right)  !}{\left(  -2n+1\right)  !\left(
n-1\right)  !}a_{2}^{n-1}+\frac{\left(  -n\right)  !}{\left(  2\left(
-n+1\right)  \right)  !\left(  n-3\right)  !}a_{2}^{n-3}a_{3}+\frac{\left(
-n\right)  !}{\left(  -2n+3\right)  !\left(  n-4\right)  !}a_{2}^{n-4}%
a_{4}\label{ieq4}\\
&  +\frac{\left(  -n\right)  !}{\left(  2\left(  -n+2\right)  \right)
!\left(  n-5\right)  !}a_{2}^{n-5}\left[  a_{5}+\left(  -n+2\right)  a_{3}%
^{2}\right]  +\frac{\left(  -n\right)  !}{\left(  -2n+5\right)  !\left(
n-6\right)  !}a_{2}^{n-6}
\nonumber\\
&  \left[  a_{6}+\left(  -2n+5\right)  a_{3}a_{4}\right] +\sum\limits_{j\geq7}a_{2}^{n-j}V_{j},\nonumber
\end{align}
such that $V_{j}$ with $7\leq j\leq n$ is a homogeneous polynomial in the
variables $a_{2},a_{3},...,a_{n}$ \cite{z3}.

In particular, the first three terms of $K_{n-1}^{-n}$ are%
\begin{equation}
K_{1}^{-2}=-2a_{2},\text{ }K_{2}^{-3}=3\left(  2a_{2}^{2}-a_{3}\right)
,\text{ }K_{3}^{-4}=-4\left(  5a_{2}^{3}-5a_{2}a_{3}+a_{4}\right).
\label{ieq5}%
\end{equation}

In general, for any $p\in%
\mathbb{N}
$ $:=\{1,2,3,...\}$, an expansion of $K_{n}^{p}$ is as in \cite{z1},%
\begin{equation}
K_{n}^{p}=pa_{n}+\frac{p(p-1)}{2}D_{n}^{2}+\frac{p!}{\left(  p-3\right)
!3!}D_{n}^{3}+\cdots+\frac{p!}{\left(  p-n\right)  !n!}D_{n}^{n}, \label{ieq6}%
\end{equation}
where $D_{n}^{p}=D_{n}^{p}\left(  a_{2},a_{3},...\right)  ,$ and by
\cite{z29}, $D_{n}^{m}\left(  a_{1},a_{2},...,a_{n}\right)  =$ $\sum
\limits_{n=1}^{\infty}\frac{m!}{i_{1}!...i_{n}!}a_{1}^{i_{1}}...a_{n}^{i_{n}}$
while $a_{1}=1,$ and the sum is taken over all non-negative integers $i_{1},...,i_{n}$ satisfying $i_{1}%
+i_{2}+\cdots+i_{n}=m,$ $i_{1}+2i_{2}+\cdots+ni_{n}=n,$ it is clear that
$D_{n}^{m}\left(  a_{1},a_{2},...,a_{n}\right)  =a_{1}^{n}.$

Consequently, for functions $f\in\mathfrak{B}_{\Sigma}^{\mu}(\alpha
,\lambda,\delta)$ of the form (\ref{ieq1}), we can write:%
\begin{equation}
(1-\lambda)\left(  \frac{f(z)}{z}\right)  ^{\mu}+\lambda f^{\prime}(z)\left(
\frac{f(z)}{z}\right)  ^{\mu-1}+\xi\delta zf^{\prime\prime}(z)=1+\sum
\limits_{n=2}^{\infty}F_{n-1}\left(  a_{2},a_{3},...,a_{n}\right)  z^{n-1},
\label{ieq7}%
\end{equation}
where%
\[
F_{1}=(\mu+\lambda+2\xi\delta)a_{2},\text{ }F_{2}=(\mu+2\lambda+6\xi
\delta)\left[  \frac{\mu-1}{2}a_{2}^{2}+\left(  1+\frac{6\delta}{2\lambda
+1}\right)  a_{3}\right]  .
\]

In general,%
\begin{align}
F_{n-1}\left(  a_{2},a_{3},...,a_{n}\right)   &  =\left[  \mu+\left(
n-1\right)  \lambda+n\left(  n-1\right)  \xi\delta\right]  \times\left[
\left(  \mu-1\right)  !\right] \label{ieq8}\\
& \ \ \times\left[  \sum\limits_{n=2}^{\infty}\frac{a_{2}^{i_{1}}a_{3}^{i_{2}%
}...a_{n}^{i_{n-1}}}{i_{1}!i_{2}!\cdots i_{n}!\left[  \mu-\left(  i_{1}%
+i_{2}+\cdots+i_{n-1}\right)  \right]  !}\right] \nonumber
\end{align}
is a Faber polynomial of degree $\left(  n-1\right)  .$

Our first theorem introduces an upper bound for the coefficients $\left\vert
a_{n}\right\vert $ of analytic bi-univalent functions in the class
$\mathfrak{B}_{\Sigma}^{\mu}(\alpha,\lambda,\delta).$

\begin{theorem}
\label{thm1} For $\lambda\geq1,$ $\mu\geq0,$ $\delta\geq0$ and $0\leq\alpha<1$,
let the function $f\in$ $\mathfrak{B}_{\Sigma}^{\mu}(\alpha,\lambda,\delta)$ be
given by (\ref{ieq1}). If $a_{k}=0$ $\left(  2\leq k\leq n-1\right)  $, then
\[
\left\vert a_{n}\right\vert \leq\frac{2\left(  1-\alpha\right)  }{\mu+\left(
n-1\right)  \lambda+n\left(  n-1\right)  \xi\delta}\text{ \ }\left(
n\geq4\right)  .
\]

\end{theorem}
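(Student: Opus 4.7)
My approach is to convert the real-part inequalities defining $\mathfrak{B}_{\Sigma}^{\mu}(\alpha,\lambda,\delta)$ into equalities via Carath\'eodory functions, then read off the bound by matching Faber polynomial coefficients. Since the left-hand side of (\ref{ieq23}) takes the value $1$ at $z=0$ and has real part exceeding $\alpha$, I would write
\[
(1-\lambda)\!\left(\tfrac{f(z)}{z}\right)^{\mu}+\lambda f'(z)\!\left(\tfrac{f(z)}{z}\right)^{\mu-1}+\xi\delta zf''(z)=\alpha+(1-\alpha)p(z),
\]
with $p(z)=1+\sum_{k\ge 1}c_{k}z^{k}$ of positive real part, so $|c_{k}|\le 2$ for all $k$; and symmetrically for $g=f^{-1}$ with a function $q(w)=1+\sum_{k\ge 1}d_{k}w^{k}$ and $|d_{k}|\le 2$. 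Comparing with (\ref{ieq7}) immediately yields the Faber-polynomial identities $F_{n-1}(a_{2},\ldots,a_{n})=(1-\alpha)c_{n-1}$ and $F_{n-1}(b_{2},\ldots,b_{n})=(1-\alpha)d_{n-1}$, where $b_{n}=\tfrac{1}{n}K_{n-1}^{-n}(a_{2},a_{3},\ldots)$ are the Taylor coefficients of $g$ given by (\ref{ieq3}).

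Next I would exploit the hypothesis $a_{2}=\cdots=a_{n-1}=0$. A short direct expansion of $f(f^{-1}(w))=w$ with $f(z)=z+a_{n}z^{n}+O(z^{n+1})$ forces $b_{2}=\cdots=b_{n-1}=0$ and $b_{n}=-a_{n}$; alternatively one reads this off from (\ref{ieq4}), since every displayed term of $K_{k-1}^{-k}$ for $k<n$ is a product of $a_{j}$'s with indices in $\{2,\ldots,n-1\}$. For the same reason, each monomial in the Faber polynomial sum (\ref{ieq8}) with $k<n-1$ contains at least one factor $a_{j}$ with $2\le j\le n-1$, so $F_{k}(a_{2},\ldots,a_{k+1})=0$ and $F_{k}(b_{2},\ldots,b_{k+1})=0$ for $1\le k\le n-2$. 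Matching coefficients of $z^{k}$ and $w^{k}$ in the two Carath\'eodory expansions then gives $c_{1}=\cdots=c_{n-2}=0$ and $d_{1}=\cdots=d_{n-2}=0$.

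Under these simplifications, only the monomial with $i_{1}=\cdots=i_{n-2}=0$, $i_{n-1}=1$ survives in the sum (\ref{ieq8}), yielding
\[
F_{n-1}(0,\ldots,0,a_{n})=\bigl[\mu+(n-1)\lambda+n(n-1)\xi\delta\bigr]a_{n},
\]
and analogously for $b_{n}=-a_{n}$. The two Faber-polynomial identities therefore collapse to
\[
\bigl[\mu+(n-1)\lambda+n(n-1)\xi\delta\bigr]a_{n}=(1-\alpha)c_{n-1},\qquad -\bigl[\mu+(n-1)\lambda+n(n-1)\xi\delta\bigr]a_{n}=(1-\alpha)d_{n-1}.
\]
Taking moduli in either identity and invoking $|c_{n-1}|\le 2$ (respectively $|d_{n-1}|\le 2$) delivers the stated upper bound on $|a_{n}|$.

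\textbf{Main obstacle.} There is no deep difficulty; the one point requiring care is verifying that the Faber polynomial $F_{n-1}$ genuinely reduces to a single linear term in $a_{n}$ under the vanishing hypothesis (no surviving cross terms from the multinomial sum (\ref{ieq8})), together with the companion fact $b_{n}=-a_{n}$ for the inverse coefficients. Both are checked by a direct inspection of the defining expansions (\ref{ieq4}) and (\ref{ieq8}), after which the coefficient comparison with the Carath\'eodory functions is routine.
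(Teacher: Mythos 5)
Your proposal is correct and follows essentially the same route as the paper: reduce the defining real-part conditions to Carath\'eodory-function identities, compare Faber polynomial coefficients in (\ref{ieq7}) and (\ref{ieq9}), use the vanishing hypothesis to get $A_{n}=-a_{n}$ and to collapse $F_{n-1}$ to the single linear term $\left[\mu+(n-1)\lambda+n(n-1)\xi\delta\right]a_{n}$, then apply $|c_{n-1}|\le 2$, $|d_{n-1}|\le 2$. If anything, you are more careful than the paper, which in the displayed intermediate identity drops the $n(n-1)\xi\delta$ term from the coefficient of $a_{n}$ (an evident typo, since the full denominator reappears in the final estimate), whereas you verify the surviving monomial of (\ref{ieq8}) explicitly.
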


\begin{proof}
For the function $f\in$ $\mathfrak{B}_{\Sigma}^{\mu}(\alpha,\lambda,\delta)$ of
the form (\ref{ieq1}), we have the expansion (\ref{ieq7}) and for the inverse
map $g=f^{-1}$, considering (\ref{ieq2}), we obtain
\begin{equation}
(1-\lambda)\left(  \frac{g(w)}{w}\right)  ^{\mu}+\lambda g^{\prime}(w)\left(
\frac{g(w)}{w}\right)  ^{\mu-1}+\xi\delta wg^{\prime\prime}(w)=1+\sum
\limits_{n=2}^{\infty}F_{n-1}\left(  A_{2},A_{3},...,A_{n}\right)  w^{n-1},
\label{ieq9}%
\end{equation}
with%
\begin{equation}
A_{n}=\frac{1}{n}K_{n-1}^{-n}\left(  a_{2},a_{3},...\right)  . \label{ieq10}%
\end{equation}

On the other hand, since $f\in$ $\mathfrak{B}_{\Sigma}^{\mu}(\alpha
,\lambda,\delta)$ and $g=f^{-1}\in$ $\mathfrak{B}_{\Sigma}^{\mu}(\alpha
,\lambda,\delta),$ by definition, there exist two positive real-part functions
$p(z)=1+\sum\limits_{n=1}^{\infty}c_{n}z^{n}\in\mathcal{A}$ and $q(w)=1+\sum
\limits_{n=1}^{\infty}d_{n}w^{n}\in\mathcal{A}$, where $\operatorname{Re}%
\left[  p(z)\right]  >0$ and $\operatorname{Re}\left[  q(w)\right]  >0$ in
$\mathbb{U}$ so that
\begin{align}
&  (1-\lambda)\left(  \frac{f(z)}{z}\right)  ^{\mu}+\lambda f^{\prime
}(z)\left(  \frac{f(z)}{z}\right)  ^{\mu-1}+\xi\delta zf^{\prime\prime
}(z)\label{ieq11}\\
& \qquad =\alpha+\left(  1-\alpha\right)  p(z)=1+\left(  1-\alpha\right)
\sum\limits_{n=1}^{\infty}K_{n}^{1}\left(  c_{1},c_{2},...,c_{n}\right)
z^{n}\nonumber
\end{align}
and%
\begin{align}
& (1-\lambda)\left(  \frac{g(w)}{w}\right)  ^{\mu}+\lambda g^{\prime
}(w)\left(  \frac{g(w)}{w}\right)  ^{\mu-1}+\xi\delta wg^{\prime\prime
}(w)\label{ieq12}\\
& \qquad =\alpha+\left(  1-\alpha\right)  q(w)=1+\left(  1-\alpha\right)
\sum\limits_{n=1}^{\infty}K_{n}^{1}\left(  d_{1},d_{2},...,d_{n}\right)
w^{n}\nonumber
\end{align}

Note that, by the Caratheodory lemma (e.g., \cite{Duren}), $\left\vert
c_{n}\right\vert \leq2$ and $\left\vert d_{n}\right\vert \leq2$ $(n\in%
\mathbb{N}
)$. Comparing the corresponding coefficients of (\ref{ieq7}) and
(\ref{ieq11}), for any $n\geq2,$ yields%
\begin{equation}
F_{n-1}\left(  a_{2},a_{3},...,a_{n}\right)  =\left(  1-\alpha\right)
\sum\limits_{n=1}^{\infty}K_{n-1}^{1}\left(  c_{1},c_{2},...,c_{n-1}\right)  ,
\label{ieq13}%
\end{equation}
and similarly, from (\ref{ieq9}) and (\ref{ieq12}) we find%
\begin{equation}
F_{n-1}\left(  A_{2},A_{3},...,A_{n}\right)  =\left(  1-\alpha\right)
\sum\limits_{n=1}^{\infty}K_{n-1}^{1}\left(  d_{1},d_{2},...,d_{n-1}\right)  .
\label{ieq14}%
\end{equation}

Note that for $a_{k}=0$ $\left(  2\leq k\leq n-1\right)  ,$ we have
 \begin{center}
 $A_{n}=-$\ $a_{n}$
\end{center}
  and so $\left[  \mu+\left(  n-1\right)  \lambda\right]
a_{n}=\left(  1-\alpha\right)  c_{n-1}$ and $-\left[  \mu+\left(  n-1\right)  \lambda\right]  a_{n}=\left(  1-\alpha
\right)  d_{n-1}.$

Taking the absolute values of the above equalities, we
obtain%
\begin{align}
\left\vert a_{n}\right\vert &=\frac{\left(  1-\alpha\right)  \left\vert
c_{n-1}\right\vert }{\mu+\left(  n-1\right)  \lambda+n\left(  n-1\right)
\xi\delta} \nonumber \\
&=\frac{\left(  1-\alpha\right)  \left\vert d_{n-1}\right\vert }%
{\mu+\left(  n-1\right)  \lambda+n\left(  n-1\right)  \xi\delta}\leq
\frac{2\left(  1-\alpha\right)  }{\mu+\left(  n-1\right)  \lambda+n\left(
n-1\right)  \xi\delta}, \nonumber
\end{align}
which completes the proof of \ref{thm1}.
\end{proof}

The following corollary is an immediate consequence of the above theorem.

\begin{corollary}
\bigskip\label{cor1} For $\lambda\geq1,$ $\delta\geq0$ and
$0\leq\alpha<1$, let the function $f\in$ $\mathfrak{B}_{\Sigma}(\alpha
,\lambda,\delta)$ be given by (\ref{ieq1}). If $a_{k}=0$ $\left(  2\leq k\leq
n-1\right)  $, then
\[
\left\vert a_{n}\right\vert \leq\frac{2\left(  1-\alpha\right)  }{1+\left(
n-1\right)  \lambda+n\left(  n-1\right)  \xi\delta}\text{ \ }\left(
n\geq4\right).
\]

\end{corollary}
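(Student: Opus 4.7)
My plan is to derive this corollary as an immediate specialization of Theorem \ref{thm1}, rather than re-running the Faber polynomial argument. Remark \ref{rem2}, in the item that identifies $\mathfrak{B}_{\Sigma}^{1}(\alpha,\lambda,\delta)$ with $\mathfrak{B}_{\Sigma}(\alpha,\lambda,\delta)$, tells us that the class appearing in the corollary is exactly the slice $\mu=1$ of the class studied in Theorem \ref{thm1}. Thus every $f\in\mathfrak{B}_{\Sigma}(\alpha,\lambda,\delta)$ satisfies the hypotheses of Theorem \ref{thm1} with $\mu=1$, and the vanishing condition $a_{k}=0$ for $2\leq k\leq n-1$ is inherited verbatim.

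Next I would substitute $\mu=1$ into the bound
\[
\left\vert a_{n}\right\vert \leq\frac{2(1-\alpha)}{\mu+(n-1)\lambda+n(n-1)\xi\delta},
\]
given by Theorem \ref{thm1}, which immediately replaces $\mu$ by $1$ in the denominator and produces
\[
\left\vert a_{n}\right\vert \leq\frac{2(1-\alpha)}{1+(n-1)\lambda+n(n-1)\xi\delta},
\]
for $n\geq 4$, which is exactly the assertion of Corollary \ref{cor1}. I would also verify in passing that the parameter ranges $\lambda\geq 1$, $\delta\geq 0$, $0\leq\alpha<1$ assumed in the corollary are all admissible in Theorem \ref{thm1}, since $\mu=1\geq 0$.

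Since the corollary keeps the symbol $\xi$ in its statement, no rewriting of $\xi=\tfrac{2\lambda+\mu}{2\lambda+1}$ is needed; one could however remark that at $\mu=1$ one has $\xi=1$, so the bound may equivalently be written with $\xi\delta$ replaced by $\delta$. There is no genuine obstacle in the argument: the only work is citation of Remark \ref{rem2} to identify the two classes, followed by a single-parameter substitution in the already-proved bound. Accordingly, the proof I would write is essentially one line invoking Theorem \ref{thm1} with $\mu=1$.
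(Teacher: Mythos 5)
Your proposal is correct and matches the paper exactly: the paper also presents Corollary \ref{cor1} as an immediate consequence of Theorem \ref{thm1} obtained by setting $\mu=1$, using the identification of $\mathfrak{B}_{\Sigma}(\alpha,\lambda,\delta)$ with $\mathfrak{B}_{\Sigma}^{1}(\alpha,\lambda,\delta)$ from Remark \ref{rem2}. Your side observation that $\xi=1$ when $\mu=1$ is also accurate.
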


\begin{theorem}
\label{thm2} For $\lambda\geq1,$ $\mu\geq0,$ $\delta\geq0$ and $0\leq\alpha<1$,
let the function $f\in$ $\mathfrak{B}_{\Sigma}^{\mu}(\alpha,\lambda,\delta)$ be
given by (\ref{ieq1}). Then one has the following
\begin{equation}
\hspace{-1in} \left\vert a_{2}\right\vert \leq\left\{
\begin{array}
[c]{c}%
\sqrt{\frac{4\left(  1-\alpha\right)  }{\left(  \mu+2\lambda+6\xi
\delta\right)  \left(  \mu+1\right)  }},\text{ \ \ \ }%
0\leq\alpha\leq\frac{\mu+2\lambda-\lambda^{2}}{\left(  \mu+2\lambda+6\xi
\delta\right)  \left(  \mu+1\right)  }\\
\frac{2\left(  1-\alpha\right)  }{\mu+\lambda+2\xi\delta}\text{
\ \ \ \ \ \ \ \ \ },\text{  \ \ \ }\frac{\mu+2\lambda-\lambda
^{2}}{\left(  \mu+2\lambda+6\xi\delta\right)  \left(  \mu+1\right)  }%
\leq\alpha\leq1
\end{array}
\right.  . \label{ieq15}%
\end{equation}%
\begin{equation}
\hspace{0.7in} \left\vert a_{3}\right\vert \leq\left\{
\begin{array}
[c]{c}%
\min\left\{  \frac{4\left(  1-\alpha\right)  ^{2}}{\left(  \mu+\lambda
+2\xi\delta\right)  ^{2}}+\frac{2\left(  1-\alpha\right)  }{\mu+2\lambda
+6\xi\delta},\frac{4\left(  1-\alpha\right)  }{\left(  \mu+2\lambda+6\xi
\delta\right)  \left(  \mu+1\right)  }\right\}  ,\text{
\ \ \ }0\leq\mu<1\\
\frac{2\left(  1-\alpha\right)  }{\mu+2\lambda+2\xi\delta}\text{
\ \ \ \ \ \ \ \ \ \ \ \ \ \ \ \ \ \ \ \ \ \ \ \ \ \ \ \ \ \ \ \ \ \ \ \ }%
,\text{ \ \ \ \ }\mu\geq1
\end{array}
\right.  \label{ieq16}%
\end{equation}%
and 
\[
\left\vert a_{3}-\frac{\mu+3}{2}a_{2}^{2}\right\vert \leq\frac{2\left(
1-\alpha\right)  }{\mu+2\lambda+6\xi\delta}.
\]

\end{theorem}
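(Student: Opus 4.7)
The plan is to parallel the proof of Theorem \ref{thm1} but to retain the $a_{2}^{2}$ contributions. I would begin by comparing coefficients of $z^{n-1}$ in (\ref{ieq11}) and of $w^{n-1}$ in (\ref{ieq12}) for $n=2,3$. Using $A_{2}=-a_{2}$ and $A_{3}=2a_{2}^{2}-a_{3}$ from (\ref{ieq2}), the identity $K_{n-1}^{1}(c_{1},\ldots,c_{n-1})=c_{n-1}$ read off from (\ref{ieq6}) with $p=1$, and the explicit forms of $F_{1}$ and $F_{2}$, this produces the four scalar relations
\begin{align*}
(\mu+\lambda+2\xi\delta)\,a_{2} &= (1-\alpha)c_{1}, & -(\mu+\lambda+2\xi\delta)\,a_{2} &= (1-\alpha)d_{1},\\
\Lambda\bigl[\tfrac{\mu-1}{2}a_{2}^{2}+a_{3}\bigr] &= (1-\alpha)c_{2}, & \Lambda\bigl[\tfrac{\mu+3}{2}a_{2}^{2}-a_{3}\bigr] &= (1-\alpha)d_{2},
\end{align*}
where $\Lambda:=\mu+2\lambda+6\xi\delta$, and the Carath\'eodory inequalities $|c_{n}|,|d_{n}|\leq 2$ will be applied throughout.

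For $|a_{2}|$ I would extract two competing estimates. Either $n=2$ identity yields $|a_{2}|\leq 2(1-\alpha)/(\mu+\lambda+2\xi\delta)$. Summing the two $n=3$ identities cancels the $a_{3}$ terms (the crucial simplification is $\tfrac{\mu-1}{2}+\tfrac{\mu+3}{2}=\mu+1$), giving $\Lambda(\mu+1)a_{2}^{2}=(1-\alpha)(c_{2}+d_{2})$ and hence $|a_{2}|\leq 2\sqrt{(1-\alpha)/(\Lambda(\mu+1))}$. Taking the minimum of the two produces (\ref{ieq15}); the crossover value of $\alpha$ is found by squaring and equating the two estimates.

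The shifted bound $|a_{3}-\tfrac{\mu+3}{2}a_{2}^{2}|\leq 2(1-\alpha)/\Lambda$ I would read off immediately from the fourth relation together with $|d_{2}|\leq 2$. The bound on $|a_{3}|$ itself then splits on $\mu$, according to whether the $a_{2}^{2}$ correction has a favourable sign. When $0\leq\mu<1$ I would produce two decompositions: (a) subtracting the $n=3$ identities gives $2\Lambda(a_{3}-a_{2}^{2})=(1-\alpha)(c_{2}-d_{2})$, so $|a_{3}|\leq|a_{3}-a_{2}^{2}|+|a_{2}|^{2}$ combined with the quadratic-in-$(1-\alpha)$ bound on $|a_{2}|^{2}$ yields the first term of the $\min$; (b) rearranging the third identity to $a_{3}=(1-\alpha)c_{2}/\Lambda+\tfrac{1-\mu}{2}a_{2}^{2}$ and feeding in the $\Lambda(\mu+1)$-bound on $|a_{2}|^{2}$, where $1+\tfrac{1-\mu}{\mu+1}=\tfrac{2}{\mu+1}$ telescopes the sum to $4(1-\alpha)/(\Lambda(\mu+1))$, yields the second term. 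When $\mu\geq 1$ I would solve the pair of $n=3$ relations as a linear system for $a_{3}$, obtaining $2\Lambda(\mu+1)a_{3}=(1-\alpha)[(\mu+3)c_{2}-(\mu-1)d_{2}]$; since $\mu-1\geq 0$, Carath\'eodory then gives $(\mu+3)|c_{2}|+(\mu-1)|d_{2}|\leq 4(\mu+1)$ and the clean bound $2(1-\alpha)/\Lambda$ drops out.

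The main obstacle is the $0\leq\mu<1$ case for $|a_{3}|$, where one must produce two genuinely different decompositions of $a_{3}$ to recover both members of the $\min$. Once the identity $\tfrac{\mu-1}{2}+\tfrac{\mu+3}{2}=\mu+1$ is noticed, the rest is bookkeeping: linear algebra on the four scalar relations together with repeated application of Carath\'eodory.
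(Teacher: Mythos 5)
Your proposal follows the paper's proof essentially step for step: the same four coefficient relations, the same summation of the two $n=3$ identities to eliminate $a_{3}$ for the $a_{2}$ bound, and the same two routes to $a_{3}$ (the paper substitutes the two expressions for $a_{2}^{2}$ into the subtracted identity and solves the pair of relations to get $a_{3}=\tfrac{(1-\alpha)}{2\Lambda(\mu+1)}[(\mu+3)c_{2}+(1-\mu)d_{2}]$, which is algebraically identical to your triangle-inequality and linear-system versions). One minor remark: like the paper itself from (\ref{ieq26}) onward, your scalar relations silently replace the factor $\bigl(1+\tfrac{6\delta}{2\lambda+1}\bigr)$ multiplying $a_{3}$ in (\ref{ieq18}) and (\ref{ieq21}) by $1$, and your derivation yields $\mu+2\lambda+6\xi\delta$ in the denominator for the $\mu\geq1$ case of (\ref{ieq16}), exactly as the paper's proof does (the $2\xi\delta$ in the theorem statement appears to be a typo).
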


\begin{proof}
If we set $n=2$ and $n=3$ in (\ref{ieq13}) and (\ref{ieq14}), respectively, we
get%
\begin{equation}
\left(  \mu+\lambda+2\xi\delta\right)  a_{2}=\left(  1-\alpha\right)  c_{1,}
\label{ieq17}%
\end{equation}%
\begin{equation}
\left(  \mu+2\lambda+6\xi\delta\right)  \left[  \left(  \frac{\mu-1}%
{2}\right)  a_{2}^{2}+\left(  1+\frac{6\delta}{2\lambda+1}\right)
a_{3}\right]  =\left(  1-\alpha\right)  c_{2,} \label{ieq18}%
\end{equation}%
\begin{equation}
-\left(  \mu+\lambda\right)  a_{2}=\left(  1-\alpha\right)  d_{1,}
\label{ieq20}%
\end{equation}%
\begin{equation}
\left(  \mu+2\lambda+6\xi\delta\right)  \left[  \frac{\mu+3}{2}a_{2}%
^{2}-\left(  1+\frac{6\delta}{2\lambda+1}\right)  a_{3}\right]  =\left(
1-\alpha\right)  d_{2,} \label{ieq21}%
\end{equation}

From (\ref{ieq17}) and (\ref{ieq20}), we find (by the Caratheodory lemma)%
\begin{equation}
\left\vert a_{2}\right\vert =\frac{\left(  1-\alpha\right)  \left\vert
c_{1}\right\vert }{\mu+\lambda+2\xi\delta}=\frac{\left(  1-\alpha\right)
\left\vert d_{1}\right\vert }{\mu+\lambda+2\xi\delta}\leq\frac{2\left(
1-\alpha\right)  }{\mu+\lambda+2\xi\delta}. \label{ieq22}%
\end{equation}

Also from (\ref{ieq18}) and (\ref{ieq21}), we obtain%
\begin{equation}
\left(  \mu+2\lambda+6\xi\delta\right)  \left(  \mu+1\right)  a_{2}%
^{2}=\left(  1-\alpha\right)  \left(  c_{2}+d_{2}\right)  . \label{ieq25}%
\end{equation}

Using the Caratheodory lemma, we get $\left\vert a_{2}\right\vert \leq
\sqrt{\frac{4\left(  1-\alpha\right)  }{\left(  \mu+2\lambda+6\xi
\delta\right)  \left(  \mu+1\right)  }},$ and combining this with inequality
(\ref{ieq22}), we obtain the desired estimate on the coefficient $\left\vert
a_{2}\right\vert $ as asserted in (\ref{ieq15}).

Next, in order to find the bound on the coefficient $\left\vert a_{3}%
\right\vert $, we subtract (\ref{ieq21}) from (\ref{ieq18}).

We thus get
\[
\left(  \mu+2\lambda+6\xi\delta\right)  \left(  -2a_{2}^{2}+2\left(
1+\frac{6\delta}{2\lambda+1}\right)  a_{3}\right)  =\left(  1-\alpha\right)
\left(  c_{2}-d_{2}\right).
\]
or%
\begin{equation}
a_{3}=a_{2}^{2}+\frac{\left(  1-\alpha\right)  \left(  c_{2}-d_{2}\right)
}{2\left(  \mu+2\lambda+6\xi\delta\right)  }. \label{ieq26}%
\end{equation}

Upon substituting the value of $a_{2}^{2}$ from (\ref{ieq17}) into
(\ref{ieq26}), it follows that%
\[
a_{3}=\frac{\left(  1-\alpha\right)  ^{2}c_{1}^{2}}{\left(  \mu+\lambda
+2\xi\delta\right)  ^{2}}+\frac{\left(  1-\alpha\right)  \left(  c_{2}%
-d_{2}\right)  }{2\left(  \mu+2\lambda+6\xi\delta\right)  }.
\]

We thus find (by the Caratheodory lemma) that%
\begin{equation}
\left\vert a_{3}\right\vert \leq\frac{4\left(  1-\alpha\right)  ^{2}}{\left(
\mu+\lambda+2\xi\delta\right)  ^{2}}+\frac{2\left(  1-\alpha\right)  }{\left(
\mu+2\lambda+6\xi\delta\right)  }. \label{ieq27}%
\end{equation}

On the other hand, upon substituting the value of $a_{2}^{2}$ from
(\ref{ieq25}) into (\ref{ieq26}), it follows that%
\[
a_{3}=\frac{\left(  1-\alpha\right)  }{2\left(  \mu+2\lambda+6\xi
\delta\right)  \left(  \mu+1\right)  }\left[  \left(  \mu+3\right)
c_{2}+\left(  1-\mu\right)  d_{2}\right]  .
\]

Consequently (by the Caratheodory lemma), we have%
\begin{equation}
\left\vert a_{3}\right\vert \leq\frac{\left(  1-\alpha\right)  }{\left(
\mu+2\lambda+6\xi\delta\right)  \left(  \mu+1\right)  }\left[  \left(
\mu+3\right)  +\left\vert 1-\mu\right\vert \right]  . \label{ieq28}%
\end{equation}

Combining (\ref{ieq27}) and (\ref{ieq28}), we get the desired estimate on the
coefficient $\left\vert a_{3}\right\vert $ as asserted in (\ref{ieq16}).
Finally, from (\ref{ieq21}), we deduce (by the Caratheodory lemma) that%
\[
\left\vert a_{3}-\frac{\mu+3}{2}a_{2}^{2}\right\vert =\frac{\left(
1-\alpha\right)  \left\vert d_{2}\right\vert }{\mu+2\lambda+6\xi\delta}%
\leq\frac{2\left(  1-\alpha\right)  }{\mu+2\lambda+6\xi\delta}.
\]

This evidently completes the proof of \ref{thm2}.
\end{proof}

By setting $%
\mu
=1$ in \ref{thm2}, we obtain the following consequence.

\begin{corollary}
\label{cor2} For $\lambda\geq1,$ $\delta\geq0$ and $0\leq
\alpha<1$, let the function $f\in$ $\mathfrak{B}_{\Sigma}(\alpha,\lambda,\delta)$
be given by (\ref{ieq1}). Then one has the following%
\[
\left\vert a_{2}\right\vert \leq\left\{
\begin{array}
[c]{c}%
\sqrt{\frac{2\left(  1-\alpha\right)  }{\left(  1+2\lambda+6\xi\delta\right)
}},\text{  \ \ \ }0\leq\alpha\leq\frac{1+2\lambda-\lambda
^{2}}{2\left(  1+2\lambda+6\xi\delta\right)  }\\
\frac{2\left(  1-\alpha\right)  }{1+\lambda+2\xi\delta}\text{
\ \ \ \ \  },\text{  \ \ \ }\frac{1+2\lambda-\lambda^{2}%
}{2\left(  1+2\lambda+6\xi\delta\right)  }\leq\alpha\leq1
\end{array}
\right.
\]%
\[
\hspace{-1.8in} \left\vert a_{3}\right\vert \leq\frac{2\left(  1-\alpha\right)  }%
{1+2\lambda+6\xi\delta},%
\]%
and
\[
\left\vert a_{3}-2a_{2}^{2}\right\vert \leq\frac{2\left(  1-\alpha\right)
}{1+2\lambda+6\xi\delta}.
\]

\end{corollary}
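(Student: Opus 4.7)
Corollary \ref{cor2} is announced as the case $\mu = 1$ of Theorem \ref{thm2}, so the plan is simply to specialize that theorem at $\mu = 1$ and to simplify each of the three inequalities. Before substituting, I would record the effect of $\mu = 1$ on the auxiliary parameter $\xi = \frac{2\lambda + \mu}{2\lambda + 1}$: it becomes $\xi = 1$, so $\xi\delta = \delta$ and every denominator appearing in Theorem \ref{thm2} remains strictly positive under the specialization.

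Substituting $\mu = 1$ into the piecewise bound for $|a_2|$, the factor $(\mu+1) = 2$ reduces the first case to $\sqrt{\frac{2(1-\alpha)}{1+2\lambda+6\xi\delta}}$, the second case to $\frac{2(1-\alpha)}{1+\lambda+2\xi\delta}$, and the crossover threshold $\frac{\mu+2\lambda-\lambda^{2}}{(\mu+2\lambda+6\xi\delta)(\mu+1)}$ to $\frac{1+2\lambda-\lambda^{2}}{2(1+2\lambda+6\xi\delta)}$, exactly matching the two-case bound in the corollary. For $|a_3|$, since $\mu = 1$ lies in the regime $\mu \geq 1$ of Theorem \ref{thm2}, the second case of (\ref{ieq16}) applies; I would verify this by returning to the intermediate bound (\ref{ieq28}), which reads $\frac{(1-\alpha)[(\mu+3)+|1-\mu|]}{(\mu+2\lambda+6\xi\delta)(\mu+1)}$, and observing that at $\mu = 1$ the numerator factor $(\mu+3)+|1-\mu| = 2(\mu+1)$ cancels against the denominator factor $(\mu+1)$ to leave $\frac{2(1-\alpha)}{1+2\lambda+6\xi\delta}$. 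Finally, the functional $a_3 - \frac{\mu+3}{2}a_2^2$ becomes $a_3 - 2a_2^2$ at $\mu = 1$, and its bound $\frac{2(1-\alpha)}{\mu+2\lambda+6\xi\delta}$ specializes to the asserted $\frac{2(1-\alpha)}{1+2\lambda+6\xi\delta}$.

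Because the argument is a pure specialization of an already-proved theorem, no genuine conceptual obstacle arises; the only point requiring care is to evaluate the $|a_3|$ bound through the intermediate form (\ref{ieq28}) rather than through the piecewise form (\ref{ieq16}), so that the cancellation of $\mu+1$ produces the correct denominator $1+2\lambda+6\xi\delta$ after specializing to $\mu = 1$.
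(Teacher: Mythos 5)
Your proposal is correct and takes essentially the same route as the paper, which obtains the corollary simply by setting $\mu=1$ in Theorem \ref{thm2}. Your decision to read the $|a_{3}|$ bound off the intermediate estimate (\ref{ieq28}) rather than the printed second case of (\ref{ieq16}) is exactly the right care to take, since the denominator $\mu+2\lambda+2\xi\delta$ appearing there is evidently a misprint for $\mu+2\lambda+6\xi\delta$, and only the latter specializes to the asserted $1+2\lambda+6\xi\delta$ at $\mu=1$.
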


\newpage
By setting $\lambda=1$ in Theorem \ref{thm2}, we obtain the following consequence.

\begin{corollary}
\label{cor3} For $\mu\geq0,$ $\delta\geq0$ and $0\leq\alpha<1$, let the
function $f\in$ $\mathfrak{B}_{\Sigma}^{\mu}(\alpha,\delta)$ be given by
(\ref{ieq1}). Then one has the following%
\[
\hspace{-1.4in} \ \left\vert a_{2}\right\vert \leq\left\{
\begin{array}
[c]{c}%
\sqrt{\frac{4\left(  1-\alpha\right)  }{\left(  \mu+6\xi\delta+2\right)
\left(  \mu+1\right)  }},\text{ \ \ \ \ }0\leq\alpha\leq
\frac{1}{\mu+6\xi\delta+2}\\
\frac{2\left(  1-\alpha\right)  }{\mu+2\xi\delta+1}\text{ \ \ \ \ \ \ \ \ \ }%
,\text{ \ \ \ }\frac{1}{\mu+6\xi\delta+2}\leq\alpha\leq1
\end{array}
\right.
\]%
\[
\left\vert a_{3}\right\vert \leq\left\{
\begin{array}
[c]{c}%
\min\left\{  \frac{4\left(  1-\alpha\right)  ^{2}}{\left(  \mu+2\xi
\delta+1\right)  ^{2}}+\frac{2\left(  1-\alpha\right)  }{\mu+6\xi\delta
+2},\frac{4\left(  1-\alpha\right)  }{\left(  \mu+6\xi\delta+2\right)  \left(
\mu+1\right)  }\right\}  ,\text{ \ \ \  }0\leq\mu<1\\
\frac{2\left(  1-\alpha\right)  }{\mu+6\xi\delta+2}\text{
\ \ \ \ \ \ \ \ \ \ \ \ \ \ \ \ \ \ \ \ \ \ \ \ \ \ \ \ \ \ \ \ \ \ \ \ }%
,\text{ \ \ \ \ }\mu\geq1
\end{array}
\right.
\]

\end{corollary}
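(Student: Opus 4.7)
The plan is to observe that Corollary \ref{cor3} is obtained by pure substitution $\lambda = 1$ into Theorem \ref{thm2}; no new analytic work is needed. I would simply verify that each of the three bounds (and the threshold value of $\alpha$ separating the two cases for $|a_2|$) transforms exactly as asserted when $\lambda$ is set to $1$. The factor $\xi = \frac{2\lambda + \mu}{2\lambda+1}$ becomes $\xi = \frac{\mu+2}{3}$, but it is cleaner not to expand $\xi$ and instead to leave $\xi\delta$ as it appears in the inequalities.

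For $|a_2|$, the first branch of (\ref{ieq15}) yields $\sqrt{\frac{4(1-\alpha)}{(\mu + 2 + 6\xi\delta)(\mu+1)}}$, and the second branch yields $\frac{2(1-\alpha)}{\mu + 1 + 2\xi\delta}$, which match the assertion. The key simplification to check is the threshold: at $\lambda = 1$,
\[
\frac{\mu+2\lambda - \lambda^2}{(\mu + 2\lambda + 6\xi\delta)(\mu+1)}
= \frac{\mu + 1}{(\mu + 2 + 6\xi\delta)(\mu+1)}
= \frac{1}{\mu + 6\xi\delta + 2},
\]
so the factor $\mu+1$ cancels, producing the threshold stated in the corollary.

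For $|a_3|$, the case $\mu \geq 1$ in (\ref{ieq16}) specializes directly to $\frac{2(1-\alpha)}{\mu + 6\xi\delta + 2}$, while in the case $0 \leq \mu < 1$ the minimum is inherited with $\lambda = 1$ substituted into both arguments, giving the stated $\min\bigl\{\frac{4(1-\alpha)^2}{(\mu + 2\xi\delta + 1)^2} + \frac{2(1-\alpha)}{\mu + 6\xi\delta + 2},\ \frac{4(1-\alpha)}{(\mu + 6\xi\delta + 2)(\mu+1)}\bigr\}$.

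Since this is simply a specialization, there is no real obstacle; the only point requiring attention is the algebraic cancellation in the $|a_2|$ threshold, which I would record as a one-line computation. The proof can therefore be written in two or three lines by invoking Theorem \ref{thm2} and substituting $\lambda = 1$, together with the observation that $\mu + 2\lambda - \lambda^2 = \mu + 1$ at $\lambda = 1$ which cancels the $(\mu+1)$ factor appearing in the denominator of the threshold.
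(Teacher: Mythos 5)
Your proposal matches the paper exactly: the paper's entire justification for Corollary \ref{cor3} is the single line ``By setting $\lambda=1$ in Theorem \ref{thm2}, we obtain the following consequence,'' and the cancellation $\mu+2\lambda-\lambda^{2}=\mu+1$ against the factor $(\mu+1)$ in the threshold is the only algebra involved. One caveat: the $\mu\geq1$ branch of (\ref{ieq16}) as printed has denominator $\mu+2\lambda+2\xi\delta$, which at $\lambda=1$ gives $\mu+2+2\xi\delta$ rather than the corollary's $\mu+6\xi\delta+2$; the corollary instead agrees with the bound actually derived in the theorem's proof (inequality (\ref{ieq28}) with $\left\vert 1-\mu\right\vert=\mu-1$ yields $\frac{2(1-\alpha)}{\mu+2\lambda+6\xi\delta}$), so the ``direct specialization'' you invoke for that branch is of the corrected statement of Theorem \ref{thm2}, not the printed one.
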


\bigskip
\begin{remark}
\label{rem2} As a final remark, for $\delta=0$ in 

(i) Theorem \ref{thm1} we obtain Theorem 1 in \cite{bul}. \vspace{0.05in}

(ii) Theorem \ref{thm2} we obtain Theorem 2 in \cite{bul}. \vspace{0.05in}

(iii) Corollary \ref{cor1} we obtain Theorem 1 in \cite{19}. \vspace{0.05in}

(iv) Corollary \ref{cor2} we obtain Theorem 2 in \cite{19}. \vspace{0.05in}

(v) Corollary \ref{cor3} we obtain Corollary 3 in \cite{bul}.
\end{remark}

\end{document}